\documentclass[11pt]{amsart}
\usepackage{a4wide}

\newcommand{\e}{\varepsilon}
\newcommand{\IN}{\mathbb N}
\newcommand{\IR}{\mathbb R}

\newcommand{\Ra}{\Rightarrow}

\newtheorem{theorem}{Theorem}
\newtheorem{corollary}{Corollary}

\newtheorem{question}{Question}

\title[Approximating points of a Banach space by points of an operator image]{Approximating points of a Banach space\\ by points of an operator image}
\author{Taras Banakh and Yuriy Golovaty}
\address{Ivan Franko National University of Lviv, Ukraine}
\email{t.o.banakh@gmail.com, yuriy.golovaty@lnu.edu.ua}
\subjclass{46B28, 46B70, 47N50, 81Q10}
\keywords{Banach space, locally convex space, approximation, Schr\"{o}dinger operator}
\dedicatory{Dedicated to the 60th birthday of M.M. Zarichnyi and 70th birthday of A.M.Plichko}

\begin{document}

\begin{abstract}Answering one problem that has its origins in quantum mechanics, we prove that for any sequence $(A_n)_{n\in\IN}$ of convex nowhere dense sets in a Banach space $X$ and any sequence $(\e_n)_{n=1}^\infty$ of positive real numbers with $\lim_{n\to\infty}\e_n=0$, the set $A=\{x\in X:\forall n\in\IN\;\exists a\in A_n\;\;\|x-a\|< \e_n\}$ is nowhere dense in $X$.
\end{abstract}
\maketitle

The question that is considered in the article arose in a problem of quantum mechanics. 
In the last two decades the Hamiltonians with singular potentials supported on submanifolds of  the configuration space $\mathbb{R}^d$ of a lower dimension, also known as pseudo-Hamiltonians, have attracted considerable attention both in the physical and mathematical literature. The potentials that are distributions with supports on curves, surfaces, and more complicated sets composed of them, often used in simulation of quantum systems, because the corresponding Schr\"{o}dinger equations are generally easier to solve.  These so-called exactly solvable models allow us to calculate explicitly numerical characteristics of systems such as eigenvalues, eigenfunctions or scattering data, the original differential equation being reduced to the analysis of an algebraic or functional problem. Very often the pseudo-Hamiltonians reveal an unquestioned effectiveness whenever the exact solvability together with non trivial qualitative description of  the actual quantum dynamics is required.
In spite of all advantages of the exactly solvable models they  give rise to many mathematical difficulties. One of the main difficulties deals with the multiplication of distributions.  To get round  the problem  of multiplication of distributions, we can regularize pseudo-potential $V\in \mathcal{D}'(\mathbb{R}^d)$ by a sequence of smooth enough potentials $V^\e$ such that $V^\e\to V$ as $\e\to 0$ in the sense of distributions,  and then investigate the convergence of Hamiltonians $H_\e=-\Delta+V^\e$ in a suitable operator topology \cite{GolovatyHrynivProcEdinburgh2013}--\cite{GolovatyJPA2018}. The main goal is to find the limit self-adjoint operator  $H_0$  and thereby to assign for the quantum system a mathematically correct  solvable model that describes the real quantum evolution with adequate accuracy.

Let $M$ be a smooth compact  manifold embedded in $\mathbb{R}^d$. Assume that $V_M\in \mathcal{D}'(\mathbb{R}^d)$ and  ${\rm supp}\, V_M\subset M$. We choose a sequence $\{V^\e\}_{\e>0}$ of smooth functions with compact supports shrinking to the manifold $M$ as $\e\to 0$. Also, this sequence converges to the distribution $V_M$ in $\mathcal{D}'(\mathbb{R}^d)$. Let us introduce the  sesquilinear form
\begin{equation*}
  a_\e(u,v)=\int_{\mathbb{R}^d}\big( \nabla u \nabla \bar{v}+V^\e(x)u\bar{v}\big)\,dx
\end{equation*}
in the Sobolev space $W_2^1(\mathbb{R}^d)$. We can realize the Hamiltonian as the operator $A_\e$ associated with form $a_\e$, i.e., $a_\e(u,v)=(A_\e u,v)_{L_2(\mathbb{R}^d)}$. Two cases arise depending on the order of the distribution $V_M$. For example, if $V_M$ is a $\delta_M$-measure with density $\mu$, that is to say
\begin{equation*}
  V_M(\phi)=\int_{M}\mu \phi\,dM, \qquad \phi\in C_0^\infty(\mathbb{R}^d),
\end{equation*}
then the forms $a_\e$ are bounded from below uniformly with respect to $\e$ and there exists the limit form
\begin{equation*}
  a_0(u,v)=\int_{\mathbb{R}^d} \nabla u \nabla \bar{v}\,dx+\int_{M}\mu u\bar{v}\,dM.
\end{equation*}
with the same domain $W_2^1(\mathbb{R}^d)$. 
From the convergence of the forms  we readily deduce the convergence  $A_\e\to A_0$  in the strong resolvent topology, where $A_0$ is an operator associated with $a_0$. 
In the case when the distribution $V_M$ is more singular,
the forms $a_\e$ are not uniformly bounded from below and the presupposed ``limit form'' $a_0$ has generally  the domain which does not coincide with the domain of $a_\e$. For instance, when trying to prove the operator convergence in the problem with
$V_M=\partial_\nu \delta_M$, where $\partial_\nu$ is a normal derivative on $M$,  we were confronted with

\begin{question}\label{quest} Is it true that for any positive real number $s$ there exist positive real numbers  $C,\alpha,\beta$ such that for any function $v\in W_2^{-s}(M)$ there exists a sequence  $\{v_n\}_{n=1}^\infty\subset W_2^{s}(M)$ such that $$\|v-v_n\|_{W_2^{-s}(M)}\leq C \cdot n^{-\alpha}\mbox{ \ and \ }\|v_n\|_{W_2^{s}(M)}\leq C\cdot n^{\beta}$$
for all $n\in\IN$?
\end{question}

It turns out that the answer to this question is negative. This negative answer will be derived (in Corollary~\ref{cor2})  from the following  theorems.

\begin{theorem}\label{t:main}  For any sequence $(A_n)_{n\in\IN}$ of convex nowhere dense sets in a normed space $X$ and any sequence $(\e_n)_{n\in\IN}$ of positive real numbers with $\lim_{n\to\infty}\e_n=0$ the set $$A=\{x\in X:\forall n\in\IN\;\;\exists a\in A_n\;\;\;\|x-a\|<\e_n\}$$ is convex and nowhere dense in $X$.
\end{theorem}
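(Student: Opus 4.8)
I would begin with convexity, which is routine: setting $U_n=\{x\in X:\|x\|<\e_n\}$ one has $A=\bigcap_{n\in\IN}(A_n+U_n)$, an intersection of Minkowski sums of convex sets, hence convex. The real content is nowhere density, and the plan is to argue by contradiction. So suppose $\overline A$ contains the open ball $B(x_0,R)$ of radius $R>0$ about some point $x_0$.

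The first step is to transfer this information to a \emph{single} set $A_n$. From $A\subseteq A_n+U_n$ it follows that $\overline A$ --- and therefore $B(x_0,R)$ --- is contained in the closed set $\{z\in X:\mathrm{dist}(z,A_n)\le\e_n\}$. I would then fix one index $n$ with $\e_n<R$ (such $n$ exists because $\e_n\to0$). For an arbitrary $f\in X^{*}$ with $\|f\|=1$, using $\sup_{z\in B(x_0,R)}f(z)=f(x_0)+R$ together with $\mathrm{dist}(z,A_n)\le\e_n$ for $z\in B(x_0,R)$, one obtains
\[
\sup_{a\in A_n}f(a)\ \ge\ \sup_{z\in B(x_0,R)}f(z)-\e_n\ =\ f(x_0)+R-\e_n .
\]
(If some $A_n$ is empty, then $A=\emptyset$ and there is nothing to prove, so one may assume each $A_n$ is nonempty; in particular $\overline{A_n}$ is a nonempty closed convex set.)

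The second step is a Hahn--Banach separation argument: I claim $B(x_0,R-\e_n)\subseteq\overline{A_n}$. Indeed, if a point $x$ with $\|x-x_0\|<R-\e_n$ were not in $\overline{A_n}$, strict separation of $x$ from the closed convex set $\overline{A_n}$ would give a functional $f$ with $\|f\|=1$ and $f(x)>\sup_{a\in A_n}f(a)\ge f(x_0)+R-\e_n$, hence $f(x-x_0)>R-\e_n$; but $f(x-x_0)\le\|x-x_0\|<R-\e_n$, a contradiction. Thus $\overline{A_n}$ contains an open ball, so it has nonempty interior, contradicting the hypothesis that $A_n$ is nowhere dense. This contradiction shows that $\overline A$ has empty interior, i.e.\ $A$ is nowhere dense.

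I expect the main obstacle to be precisely the passage from ``$\overline A$ somewhere dense'' to ``some $A_n$ somewhere dense.'' In a finite-dimensional $X$ it would suffice to recall that a convex nowhere dense set lies in an affine hyperplane and then work along the normal direction, but this is false in infinite dimensions (for instance the Hilbert cube), so that approach is unavailable. The substitute is the uniform thickening $A\subseteq A_n+U_n$ combined with the elementary support-functional estimate displayed above, after which a single use of the separation theorem concludes. It is worth remarking that only one index $n$ is used in the whole argument; the assumption $\e_n\to0$ enters solely to guarantee that some $\e_n$ is smaller than the radius $R$ of the putative ball.
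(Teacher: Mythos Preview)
Your proof is correct and follows essentially the same route as the paper: write $A=\bigcap_n(A_n+\e_nB)$, assume a ball $B(x_0,R)\subset\overline A$, pick a single index $n$ with $\e_n$ small relative to $R$, and reach a contradiction via a Hahn--Banach separation. The only cosmetic difference is that you package the endgame as the explicit inclusion $B(x_0,R-\e_n)\subset\overline{A_n}$, whereas the paper arrives at a direct numerical contradiction from the functional values.
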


\begin{proof} Let $B=\{x\in X:\|x\|<1\}$ be the open unit ball in the normed space $X$ and observe that
$$A=\bigcap_{n\in\IN}(A_n+\e_n B),$$
which implies that the set $A$ is convex (as the intersection of convex sets $A_n+\e_n B$).

It remains to prove that the set $A$ is nowhere dense.
In the opposite case its closure $\bar A$ contains an $\e$-ball $c+\e B$ for some small $\e>0$. Since $\lim_{n\to\infty}\e_n=0$, there exists $n\in\IN$ such that $\e_n<\frac18\e$. It follows that $$c+\e B\subset\bar A\subset\overline{A_n+\e_n B}\subset A_n+2\e_n B.$$ Then $\e B\subset (A_n-c)+2\e_n B$.
Since the convex set $A_n-c$ is nowhere dense in $X$, there exists a point $b\in \frac14\e B\setminus \overline{A_n-c}$. By the Hahn-Banach Separation Theorem, there exists an $\mathbb R$-linear functional $x^*:X\to\mathbb R$ of norm $\|x^*\|=1$ such that $$\sup x^*(A_n-c)<x^*(b)\le\|x^*\|\cdot\|b\|<\tfrac14\e.$$ By the definition of the norm $\|x^*\|$ of the functional $x^*$, there exists a point $x\in B$ such that $x^*(x)>\frac12$. Since $\e x\in \e B\subset (A_n-c)+2\e_nB$, there exist points $a\in A_n$ and $z\in B$ such that $\e x=a-c+2\e_n z$. Then
\begin{multline*}
\tfrac12\e<\e\cdot  x^*(x)=x^*(\e x)=x^*(a-c+2\e_nx)=x^*(a-c)+2\e_n\cdot x^*(z)\le\\
\le \sup x^*(A_n-c)+2\e_n\cdot\|x^*\|\cdot\|z\|
\le x^*(b)+2\e_n<\tfrac14\e+\tfrac14\e=\tfrac12\e,
\end{multline*}
which is a contradiction that completes the proof of the theorem.
\end{proof}

It is interesting that Theorem~\ref{t:main} does not generalize to locally convex linear metric spaces. Moreover, the property described in Theorem~\ref{t:main} can be used to characterize normable spaces among metrizable locally convex spaces.

By a {\em locally convex space} we understand a locally convex linear topological space over the field of real numbers. A locally convex space is {\em normable} if its topology is generated by a  norm. By Proposition 4.12 in \cite{FA}, a locally convex space is normable if and only if it contains a bounded neighborhood of zero.

A subset $B$ of a linear topological space $X$ is {\em bounded} if for any neighborhood $U$ of zero in $X$ there exists a positive real number $r$ such that $B\subset r{\cdot}U$.

\begin{theorem}\label{t2} Let $X$ be a locally convex space and $(U_n)_{n\in\IN}$ be a base of neighborhoods of zero in $X$. Then the following conditions are equivalent:
\begin{enumerate}
\item $X$ is normable;
\item for any sequence $(A_n)_{n\in\IN}$ of nowhere dense convex sets in $X$, the intersection\newline $\bigcap_{n\in\IN}(A_n+U_n)$ is nowhere dense in $X$;
\item for any sequence $(L_n)_{n\in\IN}$ of nowhere dense linear subspaces in $X$ the intersection $\bigcap_{n\in\IN}(L_n+U_n)$ is not equal to $X$.
\end{enumerate}
\end{theorem}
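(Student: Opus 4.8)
The plan is to establish the cycle of implications $(1)\Ra(2)\Ra(3)\Ra(1)$. For $(1)\Ra(2)$ I would fix a norm generating the topology of $X$, with open unit ball $B$. Since $(U_n)_{n\in\IN}$ is a base of neighborhoods of zero, for each $k\in\IN$ one can choose an index $n_k$ with $U_{n_k}\subset\frac1k B$. Then for any sequence $(A_n)_{n\in\IN}$ of convex nowhere dense sets,
$$\bigcap_{n\in\IN}(A_n+U_n)\subset\bigcap_{k\in\IN}\Big(A_{n_k}+\tfrac1k B\Big),$$
and the set on the right is nowhere dense by Theorem~\ref{t:main}, applied to the sequence $(A_{n_k})_{k\in\IN}$ of convex nowhere dense sets and the null sequence $(\tfrac1k)_{k\in\IN}$. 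A subset of a nowhere dense set is nowhere dense, so $\bigcap_n(A_n+U_n)$ is nowhere dense.

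The implication $(2)\Ra(3)$ is immediate, since linear subspaces are convex: $(2)$ makes $\bigcap_n(L_n+U_n)$ nowhere dense in $X$, and a nonempty topological vector space is never nowhere dense in itself, so this intersection differs from $X$.

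The essential part is $(3)\Ra(1)$, which I would prove by contraposition: assuming $X$ is not normable, I will produce nowhere dense linear subspaces $L_n$ with $L_n+U_n=X$ for every $n$; then $\bigcap_n(L_n+U_n)=X$ (each summand being contained in $X$), so $(3)$ fails. By local convexity, for each $n$ choose a convex, balanced, open neighborhood $V_n\subset U_n$ of zero with Minkowski gauge $p_n$; the family $(V_n)$ is again a base of neighborhoods of zero, and it suffices to arrange $L_n+V_n=X$. Since $X$ is not normable, it has no bounded neighborhood of zero (Proposition 4.12 in \cite{FA}), so $V_n$ is unbounded; hence some gauge $p_m$ is unbounded on $V_n$, and there are vectors $x_j\in V_n$ with $p_m(x_j)\ge j$. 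In the normed quotient $X/\ker p_m$ the images of the $x_j$ have norms tending to infinity, so they are not weakly bounded (by the uniform boundedness principle, weakly bounded subsets of a normed space are bounded); therefore some continuous linear functional on $X/\ker p_m$, and hence some continuous linear functional $f_n$ on $X$, is unbounded on the set $\{x_j\}\subset V_n$. Now $L_n:=\ker f_n$ is a proper closed hyperplane, hence nowhere dense; and $f_n(V_n)$ is a symmetric convex unbounded subset of $\IR$, so $f_n(V_n)=\IR$. Consequently every $x\in X$ can be written as $x=(x-v)+v$ with $v\in V_n$ chosen so that $f_n(v)=f_n(x)$, whence $x\in L_n+V_n$. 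Thus $L_n+V_n=X$, as required.

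The main obstacle is this last implication, and within it the step of converting the mere unboundedness of $V_n$ into a single continuous functional witnessing $L_n+U_n=X$; I expect the quotient $X/\ker p_m$ together with the uniform boundedness principle to handle it cleanly, with the remaining verifications (that $\ker f_n$ is nowhere dense and that $f_n(V_n)=\IR$) being routine. The implications $(1)\Ra(2)$ and $(2)\Ra(3)$ are straightforward, the former reducing directly to Theorem~\ref{t:main}.
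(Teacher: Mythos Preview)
Your proof is correct and follows the same cycle $(1)\Rightarrow(2)\Rightarrow(3)\Rightarrow(1)$ as the paper, with $(1)\Rightarrow(2)$ reducing to Theorem~\ref{t:main} exactly as there and $(2)\Rightarrow(3)$ immediate. The only substantive difference lies in $(3)\Rightarrow(1)$: the paper obtains the functional $f_n$ in one stroke by citing the general fact that in a locally convex space a set is bounded iff it is weakly bounded (Theorem~3.18 in \cite{Rudin}), applied directly to the unbounded symmetric neighborhood $V_n$; you instead first locate a continuous seminorm $p_m$ unbounded on $V_n$, pass to the normed quotient $X/\ker p_m$, and invoke the uniform boundedness principle there (via the completeness of the dual) before lifting back. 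Your route is more self-contained---it essentially reproves the needed instance of Rudin's theorem---while the paper's is shorter; both yield the same $f_n$ with $f_n(V_n)=\IR$, after which the conclusion $L_n+V_n=X$ is reached identically. As a side remark, your choice of a convex balanced $V_n\subset U_n$ is slightly more careful than the paper's $V_n=U_n\cap(-U_n)$, whose convexity tacitly presumes that $U_n$ itself is convex.
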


\begin{proof} $(1)\Ra(2)$ Assume that the locally convex space $X$ is normable and let $\|\cdot\|$ be a norm generating the topology of $X$. Since $(U_n)_{n\in\IN}$ is a base of neighborhoods of zero, for every $k\in\IN$ there exists $n_k\in\IN$ such that $U_{n_k}\subset \{x\in X:\|x\|<\tfrac1k\}$.

Let $(A_n)_{n\in\IN}$ be a sequence of nowhere dense convex sets in $X$. Applying Theorem~\ref{t:main} to the normed space $(X,\|\cdot\|)$, we conclude that the set $$A=\{x\in X:\forall k\in\IN\;\exists y\in A_{n_k}\;\;\|x-y\|<\tfrac1k\}$$is nowhere dense in $X$. Observing that
$$\bigcap_{n\in\IN}(A_n+U_n)\subset \bigcap_{k\in\IN}(A_{n_k}+U_{n_k})\subset A$$we conclude that the set $\bigcap_{n\in\IN}(A_n+U_n)$ is nowhere dense, too.
\smallskip

The implication $(2)\Ra(3)$ is trivial.
\smallskip

$(3)\Ra(1)$ Assume that the space $X$ is not normable. By Proposition 4.12 \cite{FA}, the space contains no bounded neighborhoods of zero. Then for every $n\in\IN$ the neighborhood $V_n=U_n\cap (-U_n)$ of zero is unbounded. By Theorem 3.18 in \cite{Rudin}, the set $V_n$ is not weakly bounded, which allows us to find a linear continuous functional $f_n:X\to\IR$ such that the image $f_n(V_n)$ is unbounded in the real line. Taking into account that $V_n$ is convex and $V_n=-V_n$, we conclude that $f_n(V)=\IR$. Then for the nowhere dense  linear subspace $L_n=f^{-1}_n(0)$ of $X$ we get $X=L_n+V_n\subset L_n+U_n$, which implies $\bigcap_{n\in\IN}(L_n+U_n)=X$.
\end{proof}

In spite of Theorem~\ref{t2}, Theorem~\ref{t:main} does admit a partial generalization to locally convex linear metric spaces.

\begin{theorem}\label{t3} Let $X$ be a locally convex space and $d$ be an invariant metric generating the topology of $X$. For any sequence $(B_n)_{n\in\IN}$ of nowhere dense bounded convex sets in $X$ and any sequence $(\e_n)_{n\in\IN}$ of positive real numbers with $\lim_{n\to\infty}\e_n=0$ the set $$B=\{x\in X:\forall n\in\IN\;\;\exists y\in B_n\;\;\;d(x,y)<\e_n\}$$ is nowhere dense in $X$.
\end{theorem}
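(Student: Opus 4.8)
The plan is to first replace the invariant metric $d$ by a canonical one, then to settle the normable case by Theorem~\ref{t:main} and the non-normable case by working directly with the generating seminorms. Since $X$ is metrizable and locally convex, fix an increasing sequence $(p_k)_{k\in\IN}$ of continuous seminorms generating its topology and set
\[
\rho(x,y):=\sum_{k=1}^{\infty}2^{-k}\min\{1,p_k(x-y)\},\qquad O^{\rho}_{r}:=\{x\in X:\rho(x,0)<r\};
\]
then $\rho$ is an invariant metric generating the topology of $X$, and $\min\{1,p_j(x-y)\}\le 2^{j}\rho(x,y)$ for every $j$. As the identity map $(X,d)\to(X,\rho)$ is continuous and $\e_n\to0$, there are positive reals $\delta_n\to0$ with $\{x:d(x,0)<\e_n\}\subseteq O^{\rho}_{\delta_n}$ for all $n$, so $B\subseteq\bigcap_{n\in\IN}(B_n+O^{\rho}_{\delta_n})=:B'$ and it suffices to prove $B'$ nowhere dense. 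If $X$ is normable, choose a norm $\|\cdot\|$ generating its topology; then (again by continuity of an identity map and $\delta_n\to0$) $O^{\rho}_{\delta_n}\subseteq\{x:\|x\|<\delta'_n\}$ with $\delta'_n\to0$ for all large $n$, so after discarding finitely many indices $B'$ lies in a set of the form $\{x\in X:\forall n\;\exists b\in B_n\;\|x-b\|<\delta'_n\}$, which is nowhere dense by Theorem~\ref{t:main} applied in the normed space $(X,\|\cdot\|)$ (carrying the topology of $X$).

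So assume $X$ is not normable and suppose, for a contradiction, that $B'$ is not nowhere dense. Then $\overline{B'}$ has nonempty interior, hence contains a ball $c+O^{\rho}_{\e}$ with $c\in X$ and $\e>0$. Since $B'\subseteq B_n+O^{\rho}_{\delta_n}$ for every $n$, this yields $c+O^{\rho}_{\e}\subseteq\overline{B_n+O^{\rho}_{\delta_n}}$ for every $n$.

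The heart of the argument is that boundedness of $B_n$ forces every seminorm to be bounded on $O^{\rho}_{\e}$ once $\delta_n$ is small. Fix $j$ with $2^{j}\delta_n<1$. Given $z\in O^{\rho}_{\e}$ and $\gamma\in(0,2^{-j})$, pick $b\in B_n$ and $w\in O^{\rho}_{\delta_n}$ with $\rho(c+z,b+w)<\gamma$; then $p_j(c+z-b-w)<2^{j}\gamma$ and $p_j(w)<2^{j}\delta_n$ by the displayed inequality, whence
\[
p_j(c+z)\le p_j(c+z-b-w)+p_j(b)+p_j(w)<2^{j}\gamma+\sup_{B_n}p_j+2^{j}\delta_n .
\]
Letting $\gamma\to0$ and using $p_j(z)\le p_j(c+z)+p_j(c)$ gives $\sup_{z\in O^{\rho}_{\e}}p_j(z)\le\sup_{B_n}p_j+2^{j}\delta_n+p_j(c)<\infty$, finiteness of $\sup_{B_n}p_j$ being precisely the boundedness of $B_n$. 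On the other hand, $X$ is not normable, so the neighbourhood $O^{\rho}_{\e}$ of $0$ is unbounded, hence some $p_m$ is unbounded on it; let $J$ be the least index with $\sup_{O^{\rho}_{\e}}p_J=\infty$. Choosing $n$ with $2^{J}\delta_n<1$ and applying the estimate with $j=J$ gives $\sup_{O^{\rho}_{\e}}p_J<\infty$, a contradiction. Hence $B'$, and so $B$, is nowhere dense.

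The step I expect to be the real obstacle is locating this mechanism rather than verifying it: the naive adaptation of the proof of Theorem~\ref{t:main} breaks because a ball of an invariant metric in a non-normable space is an \emph{unbounded} neighbourhood of zero, so the Hahn--Banach argument there gives no control over $\sup x^{*}(O^{\rho}_{\e_n})$, and indeed by Theorem~\ref{t2} the conclusion genuinely fails without the boundedness of the $B_n$. Passing to the canonical metric $\rho$ is what makes the seminorm bookkeeping possible: such a neighbourhood, although unbounded, is small along $p_k$ for only finitely many $k$, and this finite width is exactly what is defeated once each $p_k$ is bounded on $B_n$. The remaining points to check are routine: the continuity-based passages from $d$ to $\rho$ and from $\rho$ to $\|\cdot\|$ together with $\e_n\to0$, the finiteness of the index $J$ (which uses that no neighbourhood of zero is bounded in a non-normable locally convex space), and that subsets of nowhere dense sets are nowhere dense.
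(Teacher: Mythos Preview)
Your proof is correct, but the route differs from the paper's. The paper does not pass to a canonical seminorm metric or split into normable and non-normable cases upfront. Instead it fixes an arbitrary decreasing convex symmetric neighborhood base $(U_k)_{k\in\IN}$, replaces the $\e_n$-balls of $d$ by suitable $U_{k_n}$ with $k_n\to\infty$, and proves directly that the intersection $C=\bigcap_{n}(B_n+U_{k_n})$ is \emph{bounded}: given any neighborhood $U$ of zero, choose $n$ with $U_{k_n}\subset U$ and $r$ with $B_n\subset rU_{k_n}$, so that $C\subset B_n+U_{k_n}\subset (r+1)U_{k_n}\subset(r+1)U$. If $C$ were not nowhere dense, $\bar C-\bar C$ would then be a bounded neighborhood of zero, forcing $X$ to be normable, and Theorem~\ref{t2} (hence Theorem~\ref{t:main}) finishes. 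Your argument reverses this logic: assuming non-normability, you use the explicit form of $\rho$ to bound each seminorm $p_j$ on the ball $O^\rho_\e$ by $\sup_{B_n}p_j+2^j\delta_n+p_j(c)$, contradicting the unboundedness of that neighborhood. The two arguments are dual expressions of the same mechanism (boundedness of the $B_n$ forces boundedness of the intersection, hence normability); the paper's version is shorter and coordinate-free, while yours yields a concrete quantitative inequality that makes the role of the boundedness hypothesis fully explicit. Your ``least index $J$'' is harmless but unnecessary: any $J$ with $p_J$ unbounded on $O^\rho_\e$ already gives the contradiction once $n$ is chosen with $2^J\delta_n<1$.
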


\begin{proof} The space $X$ being locally convex and metrizable, has a neighborhood base $\{U_k\}_{k\in\IN}$ at zero consisting of open convex neighborhoods of zero such that $U_1=X$ and $U_{k+1}\subset U_k=-U_k$ for all $k\in\IN$.
For every $n\in\IN$ let $k_n\in\IN$ be the largest number such that $\{x\in X:d(x,0)<\e_n\}\subset U_{k_n}$. It follows from $\lim_{n\to\infty}\e_n=0$ that $\lim_{n\to\infty}k_n=\infty$.

Observe that $B\subset \bigcap_{n\in\IN}(B_n+U_{k_n})$. So, it suffices to prove that the set $C=\bigcap_{n\in\IN}(B_n+U_{k_n})$ is nowhere dense. It is clear that the set $C$ is convex (being the intersection of the convex sets $B_n+U_{k_n}$). Next, we show that the set  $C$ is bounded in $X$. Given any neighborhood $U\subset X$ of zero, find $n\in\IN$ such that $U_{k_n}\subset U$. Such number $k_n$ exists as $\lim_{i\to\infty}k_i=\infty$ and $\{U_k\}_{k\in\IN}$ is a decreasing neighborhood base at zero. Since the set $B_n$ is bounded, there exists a real number $r$ such that $B_n\subset r{\cdot}U_{k_n}$. The convexity of $U_{k_n}$ ensures that for any $x,y\in U_{k_n}$ we have $$rx+y=(r+1)(\tfrac{r}{r+1}x+\tfrac1{r+1}y)\in (r+1){\cdot}U_{k_n}$$and hence
$C\subset B_n+U_{k_n}\subset r{\cdot}U_{k_n}+U_{k_n}=(r+1){\cdot}U_{k_n}\subset(r+1){\cdot}U$, which means that the set $C$ is bounded.

Assuming that $C$ is not nowhere dense, we conclude that its closure $\bar C$ has non-empty interior and then  $\bar C-\bar C:=\{x-y:x,y\in\bar C\}$ is a bounded convex symmetric neighborhood of zero in $X$. By Proposition 4.12 in \cite{FA}, the locally convex space $X$ is normable. By the implication $(1)\Ra(2)$ in Theorem~\ref{t2}, the intersection $\bigcap_{n\in\IN}(B_n+U_{n_k})\supset B$ is nowhere dense in $X$.
\end{proof}

Now we shall use Theorem~\ref{t3} to give a negative answer to Question~\ref{quest}.

\begin{corollary}\label{cor1} Let $T:X\to Y$ be a non-open bounded operator from a Banach space $(X,\|\cdot\|_X)$ to a locally convex linear metric space $(Y,\|\cdot-\cdot\|_Y)$. For any sequences  $(r_n)_{n=1}^\infty$ and $(\e_n)_{n=1}^\infty$ of positive real numbers with $\lim_{n\to\infty}\e_n=0$, the set $$A=\big\{y\in Y:\forall n\in\IN\;\;\exists x\in X\;\;\;\big(\,\|x\|_X<r_n\mbox{ and }\|y-Tx\|_Y<\e_n\big)\big\}$$ is convex and nowhere dense in $Y$.
\end{corollary}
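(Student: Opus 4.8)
The plan is to deduce the corollary from Theorem~\ref{t3}. Write $B_X=\{x\in X:\|x\|_X<1\}$ for the open unit ball of $X$ and set $B_n:=T(r_nB_X)=\{Tx:\|x\|_X<r_n\}$ for $n\in\IN$. Recalling that the topology of $Y$ is generated by the invariant metric $d(y,z)=\|y-z\|_Y$, one checks at once that the set $A$ of the corollary is exactly the set
$$\{y\in Y:\forall n\in\IN\;\;\exists z\in B_n\;\;\;d(y,z)<\e_n\}$$
treated in Theorem~\ref{t3}, and that $A=\bigcap_{n\in\IN}\big(B_n+\{w\in Y:\|w\|_Y<\e_n\}\big)$; in particular $A$ is convex, being an intersection of sums of convex sets. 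So it remains to verify that each $B_n$ is convex, bounded and nowhere dense, after which Theorem~\ref{t3} applies to $(B_n)_{n\in\IN}$ and $(\e_n)_{n\in\IN}$ and yields the nowhere-density of $A$.

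Convexity of $B_n$ is clear (it is the linear image of the convex set $r_nB_X$), and boundedness is immediate too: $T$ being a bounded operator, it maps the bounded set $r_nB_X$ to a bounded subset of $Y$. The substantial point — and the one place where completeness of $X$ and the hypothesis that $T$ is not open come in — is that $B_n$ is nowhere dense. Since $B_n=r_n\cdot T(B_X)$ is the image of $T(B_X)$ under the homeomorphism $y\mapsto r_ny$, it suffices to prove that $T(B_X)$ is nowhere dense. Assume not. Then $V:=\overline{T(B_X)}$ has nonempty interior; as $B_X=-B_X$ and $T$ is linear, $T(B_X)$, and hence $V$, is convex and symmetric, so a convex symmetric subset of a topological vector space with nonempty interior is a neighborhood of the origin. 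Moreover $V$ is bounded, being the closure of the bounded set $T(B_X)$.

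At this stage I would run the successive-approximation argument from the proof of the open mapping theorem, but leaning on completeness of the \emph{domain} $X$ rather than of $Y$ (which is not assumed): given $y\in\tfrac12V=\overline{T(\tfrac12B_X)}$, choose inductively points $x_k\in X$ with $\|x_k\|_X<2^{-k}$ and $y-T(x_1+\dots+x_k)\in 2^{-k-1}V$; this is possible because $2^{-k-1}V$ is a neighborhood of the origin and $y-T(x_1+\dots+x_{k-1})$ lies in $2^{-k}V=\overline{T(2^{-k}B_X)}$. Since $\sum_k\|x_k\|_X<\sum_k2^{-k}=1$ and $X$ is a Banach space, the series $\sum_kx_k$ converges to some $x\in B_X$; and since $V$ is bounded, the sets $2^{-k-1}V$ shrink to the origin, so $y-T(x_1+\dots+x_k)\to 0$, which by continuity of $T$ forces $y=Tx\in T(B_X)$. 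Thus $T(B_X)\supseteq\tfrac12V$ is a neighborhood of the origin, which makes $T$ an open map — contrary to hypothesis. Hence $T(B_X)$, and with it each $B_n$, is nowhere dense, and Theorem~\ref{t3} completes the proof.

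The hard part is exactly this last step: making the open-mapping iteration go through although $Y$ is merely locally convex, metrizable and possibly incomplete. It works because completeness is used only to sum the series $\sum x_k$ inside $X$, while the boundedness of $T(B_X)$ (coming from $T$ being a bounded operator) is precisely what forces the ``error'' sets $2^{-k}V$ to contract to $0$ in $Y$.
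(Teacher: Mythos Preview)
Your proof is correct and follows the paper's strategy: set $B_n=T(r_nB_X)$, verify these are convex, bounded and nowhere dense, then invoke Theorem~\ref{t3}. The only difference is in how you establish that $T(B_X)$ is nowhere dense. The paper argues by a case split: if $Y$ is not normable, then $Y$ has no bounded neighborhood of zero, so the bounded set $T(B_X)$ is automatically nowhere dense; if $Y$ is normable, the paper cites Banach's Lemma (the open-mapping iteration) from \cite{FA}. You instead give a single self-contained argument, running the successive-approximation directly and using boundedness of $V=\overline{T(B_X)}$ to make the error sets $2^{-k}V$ contract to $0$; this handles both cases at once (a bounded neighborhood of zero would in any event force $Y$ to be normable). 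Your route is slightly more elementary and avoids external references; the paper's is shorter by outsourcing the iteration to the literature.
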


\begin{proof} For every $n\in\IN$ consider the $\e_n$-neighborhood $U_n=\{y\in Y:\|y-0\|_Y<\e_n\}$ of zero in $Y$. Since the operator $T$ is not open, the image $T(B_X)$ of the unit ball $B_X=\{x\in X:\|x\|_X<1\}$ has empty interior in  $Y$. We claim that $T(B_X)$ is nowhere dense in $Y$. If the locally convex space $Y$ is not normable, then $Y$ contains no bounded neighborhoods of zero, which implies that the bounded set $T(B_X)$ is nowhere dense in $Y$. If $Y$ is normable, then the set $T(B_X)$ is nowhere dense in $Y$ by Banach's Lemma 2.23 in \cite{FA}.

 Then for every $n\in\IN$ the bounded convex set $A_n:=T(r_n B_X)$ is nowhere dense in $Y$. Applying Theorem~\ref{t3}, we conclude that the set $A=\bigcap_{n\in\IN}(A_n+U_n)$ is convex and nowhere dense in $Y$.
\end{proof}

\begin{corollary}\label{cor2} Let $T:X\to Y$ be a non-open bounded operator from a Banach space $(X,\|\cdot\|_X)$ to a locally convex linear metric space $(Y,\|\cdot-\cdot\|_Y)$. For any positive real constants $C,\alpha,\beta$ the set $$A_{C,\alpha,\beta}=\big\{y\in Y:\forall n\in\IN\;\;\exists x\in X\;\;\big(\,\|y-Tx\|_Y<Cn^{-\alpha}\mbox{ \ and \ }\|x\|_X< Cn^\beta\,\big)\big\}$$ is convex and nowhere dense in $Y$. Consequently, the set $$A=\bigcup_{C,\alpha,\beta>0}A_{C,\alpha,\beta}=\bigcup_{k\in\IN}A_{k,\frac1k,k}$$ is meager in $Y$.
\end{corollary}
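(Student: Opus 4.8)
The plan is to obtain the first assertion as a direct instance of Corollary~\ref{cor1} and then to deduce meagerness of $A$ by passing to a countable cofinal subfamily of the index set. First I would fix positive reals $C,\alpha,\beta$ and put $r_n:=Cn^\beta$ and $\e_n:=Cn^{-\alpha}$ for all $n\in\IN$. Because $\alpha>0$ we have $\lim_{n\to\infty}\e_n=0$, so Corollary~\ref{cor1} applies to the sequences $(r_n)_{n\in\IN}$ and $(\e_n)_{n\in\IN}$ and tells us that the set $\{y\in Y:\forall n\in\IN\;\exists x\in X\;(\|x\|_X<r_n\text{ and }\|y-Tx\|_Y<\e_n)\}$ is convex and nowhere dense in $Y$. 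Since this set coincides with $A_{C,\alpha,\beta}$, the first claim follows.

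For the ``consequently'' part I would first record a monotonicity property of the family: $A_{C,\alpha,\beta}\subseteq A_{C',\alpha',\beta'}$ whenever $C\le C'$, $\alpha'\le\alpha$ and $\beta\le\beta'$. Indeed, under these inequalities $Cn^{-\alpha}\le C'n^{-\alpha'}$ and $Cn^\beta\le C'n^{\beta'}$ for every $n\in\IN$, so any $x$ witnessing $y\in A_{C,\alpha,\beta}$ at level $n$ also witnesses $y\in A_{C',\alpha',\beta'}$ at level $n$. Given arbitrary $C,\alpha,\beta>0$, I would pick $k\in\IN$ with $k\ge\max\{C,1/\alpha,\beta\}$; then $C\le k$, $1/k\le\alpha$ and $\beta\le k$, so $A_{C,\alpha,\beta}\subseteq A_{k,\frac1k,k}$. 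Consequently $A=\bigcup_{C,\alpha,\beta>0}A_{C,\alpha,\beta}=\bigcup_{k\in\IN}A_{k,\frac1k,k}$, and the latter is a countable union of nowhere dense sets (by the first part of the corollary), hence meager in $Y$.

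There is no real obstacle here: the statement is essentially a repackaging of Corollary~\ref{cor1}. The only point that deserves a line of verification is the elementary exponent bookkeeping in the monotonicity step, namely that $k\ge1/\alpha$ yields $n^{-\alpha}\le n^{-1/k}$ for all $n\ge1$ and $k\ge\beta$ yields $n^\beta\le n^k$ for all $n\ge1$, so that enlarging the norm budget and shrinking the error budget is a genuine weakening of the defining condition. Once this is in place, the representation $A=\bigcup_{k\in\IN}A_{k,\frac1k,k}$ together with the definition of a meager set finishes the proof.
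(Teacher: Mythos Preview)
Your proposal is correct and follows essentially the same route as the paper: you specialize Corollary~\ref{cor1} with $r_n=Cn^\beta$, $\e_n=Cn^{-\alpha}$, and then reduce the uncountable union to the countable subfamily $\{A_{k,\frac1k,k}\}_{k\in\IN}$ via the choice $k\ge\max\{C,1/\alpha,\beta\}$. The only addition over the paper's proof is that you spell out the monotonicity of $A_{C,\alpha,\beta}$ in the parameters, which is harmless extra detail.
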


\begin{proof} To show that the set $A_{C,\alpha,\beta}$ is convex and nowhere dense, apply Corollary~\ref{cor1} to the sequences $(r_n)_{n\in\IN}$ and $(\e_n)_{n\in\IN}$, defined by $r_n=Cn^\beta$ and $\e_n=Cn^{-\alpha}$ for $n\in\IN$.

To see that $\bigcup_{C,\alpha,\beta>0}A_{C,\alpha,\beta}=\bigcup_{k\in\IN}A_{k,\frac1k,k}$, take any positive real numbers $C,\alpha,\beta$ and choose any  number $k\ge \max\{C,\frac1{\alpha},\beta\}$. The choice of $k$ guarantees that $Cn^{-\alpha}\le \kappa n^{-\frac1k}$ and $Cn^\beta\le kn^k$ for every $n\in\IN$, which implies the inclusion $A_{C,\alpha,\beta}\subset A_{k,\frac1k,k}$.
\end{proof}

\end{document}